\documentclass[reqno]{amsart}
\usepackage{amssymb,amscd,verbatim,graphics,color,longtable}

\begin{document}

\setcounter{secnumdepth}{2}


\newcommand \und[2]{\underset{#1} {#2}}
\newcommand \fk[1]{{{\mathfrak #1}}}
\newcommand \C[1]{{\mathcal #1}}
\newcommand \ovl[1]{{\overline {#1}}}
\newcommand \ch[1]{{\check{#1}}}
\newcommand \bb[1]{{\mathbb #1}}
\newcommand \sL[1]{{^L {#1}}}
\newcommand \sT[1]{{^t {#1}}}
\newcommand \sV[1]{{^\vee{#1}}}

\newcommand \ti[1]{{\tilde #1}}
\newcommand \wti[1]{{\widetilde {#1}}}
\newcommand \un[1]{\underline{#1}}
\newcommand \unb[2]{\underset{#1}{{\underbrace{#2}}}}
\newcommand \ovb[2]{\overset{#1}{\overbrace{#2}}}

\newcommand\fg{\mathfrak g}
\newcommand\fh{\mathfrak h}
\newcommand\fa{\mathfrak a}
\newcommand\fz{\mathfrak z}
\newcommand\fm{\mathfrak m}
\newcommand\fn{\mathfrak n}
\newcommand\fp{\mathfrak p}

\newcommand \bA{{\mathbb A}}
\newcommand \bC{{\mathbb C}}
\newcommand \bF{{\mathbb F}}
\newcommand \bH{{\mathbb H}}
\newcommand \bR{{\mathbb R}}
\newcommand \bZ{{\mathbb Z}}

\newcommand\cha{{\check \alpha}}
\newcommand\chb{{\check \beta}}

\newcommand\one{1\!\!1}

\newcommand\CA{{\C A}}
\newcommand\CB{{\C B}}
\newcommand\CH{{\C H}}
\newcommand\CI{{\C I}}
\newcommand\CO{{\C O}}
\newcommand\CS{{\C S}}
\newcommand\CW{{\C W}}
\newcommand\CX{{\C X}}
\newcommand\CG{{\C G}}
\newcommand\CU{{\C U}}

\newcommand\CCI{{\C C(\CI)}}

\newcommand \LA{{\sL A}}
\newcommand\Lfa{{\sL {\fk a}}}
\newcommand \LG{{\sL G}}
\newcommand \LM{{\sL M}}
\newcommand \LB{{\sL B}}
\newcommand \Lg{{\sL{\fk g}}}

\newcommand\Ls{{\it C-symbol~ }}
\newcommand\DS{{\it Discrete series~ }}
\newcommand\Tr{{\it Tempered Representation~ }}
\newcommand\SpC{{\it Springer Correspondence~ }}
\newcommand\Sps{{\it S-symbol~ }}
\newcommand\LKT{{\it lowest K-type~ }}
\newcommand\LKTs{{\it lowest K-types~ }}
\newcommand\St{{\it Steinberg Representation~ }}
\newcommand\Sts{{\it Steinberg Representations~}}
\newcommand\sde[2]{{\ ^#1{#2}}}

\newcommand\ie{{\it i.e.}}
\newcommand\cf{{\it cf.~ }}
\newcommand\eg{{\it e.g.}}

\newcommand\ep{{\epsilon}}
\newcommand\la{{\lambda}}
\newcommand\om{{\omega}}
\newcommand\ome{{\omega}}
\newcommand\al{{\alpha}}
\newcommand\sig{{\sigma}}

\newcommand \vG{{\check G}}
\newcommand \vM{{\check M}}
\newcommand\oX{\ovl{X}}
\newcommand \vg{{\check \fk g}}
\newcommand \bXO{{\overline{X}(\C O)}}
\newcommand\bX{{\ovl{X}}}

\newtheorem*{theorem}{Theorem}
\newtheorem*{theorem 2}{Theorem 2}
\newtheorem*{corollary}{Corollary}
\newtheorem*{lemma}{Lemma}
\newtheorem*{proposition}{Proposition}
\newtheorem*{remark}{Remark}
\newtheorem*{definition}{Definition}
\newtheorem*{example}{Example}

\newcommand\Hom{\operatorname{Hom}}
\newcommand\Ind{\operatorname{Ind}}

\numberwithin{equation}{subsection}


\begin{title}{Reducibility of generic unipotent standard modules}\end{title}

\author{Dan Barbasch}
       \address[D. Barbasch]{Dept. of Mathematics\\
               Cornell University\\Ithaca, NY 14850}
       \email{barbasch@math.cornell.edu}

\author{Dan Ciubotaru}
        \address[D. Ciubotaru]{Dept. of Mathematics\\ University of
          Utah\\ SLC, UT 84112}
        \email{ciubo@math.utah.edu}


\thanks{This research was supported in part by the NSF grants DMS-0901104, 0968065, and 0967386.}

\begin{abstract}
Using Lusztig's geometric classification, we find the reducibility
points of a standard module for the affine Hecke algebra, in the case when
the inducing data is generic. This recovers the known result of
\cite{MS} for representations of split $p$-adic groups with
Iwahori-spherical Whittaker vectors. We also give a necessary
(but insufficient) condition for reducibility in the non-generic case. 
\end{abstract}

\maketitle



\footnotetext{2000 {\it Mathematics Subject Classification}. Primary 22E50}

In \cite{L6}, the unipotent representations of a split $p$-adic group
$\C G$
of adjoint type are classified in terms of geometric data for the  dual
complex group $G$. More precisely, they are indexed by certain triples
$(\chi,\CO,\C L),$ where $\chi$ is a Weyl orbit of semisimple elements
in $G$, $\CO$ is a  ``graded'' orbit  in the Lie
algebra $\fg$, and $\C L$ is a local system on $\CO.$ This is realized
via equivalences with module categories for affine Hecke algebras
of {geometric type} constructed from $G$ (\cite{L2,L3}). 
It is shown in \cite{Re}, that in this correspondence, the unipotent
representations of $\C G$ admitting Whittaker vectors ({generic})
correspond to maximal orbits $\CO$ and trivial $\C L.$ For
Iwahori-spherical representations, the same result, with a different
proof, follows from \cite{BM} (and \cite{BM1}). 

In this paper, we determine explicitly, as a consequence of the
geometric classification, the reducibility points for the
standard representations (in the sense of Langlands classification)
when the inducing data is generic. This was known from \cite{CS} and \cite{MS}, as a
consequence of the Langlands-Shahidi method. In particular, our main
result, theorem \ref{sec:3.2} is essentially the same as proposition
3.3 in \cite{MS} (our parameter $\nu$ corresponds to the parameter $s$ in
there). We also show that for non-generic inducing data, the
reducibility points are necessarily a subset of those for the
corresponding generic case.  

For simplicity, we will work in the setting of the graded affine {Hecke
algebra} of \cite{L1}, and real central character (section
\ref{sec:1.2}), from which one can recover the representation theory
of the affine Hecke algebra (see section 4 in \cite{L6} for
example). Most of the paper is devoted to recalling the relevant
geometric results, particularly from \cite{L7}. Once they are in
place, the reducibility follows immediately by a simple comparison of
dimensions of  orbits. The essential result that we need is corollary \ref{c:lusztig}.

\smallskip

The information about reducibility of standard modules played an important
role in the determination of the generic Iwahori-spherical unitary dual
(equivalently, spherical unitary dual) of split $p$-adic groups of
exceptional types in \cite{BC}. In fact, this paper is mainly motivated by that work.

\section{Graded Hecke algebra}
\subsection{} Let $\fh$ be a finite dimensional vector space, 
$R\subset \fh^*$ a root system, with $\Pi=\{\al_1,\dots,\al_n\}$ the set
of simple roots, $\check R\subset \fh$ the set of coroots, and $W$ the Weyl group. Let $c:R\to\bZ_{>0}$ be a
function such that $c_\al=c_\beta$, whenever $\al$ and $\beta$ are $W$-conjugate.
As a vector space, \begin{equation}\label{1.1.1}\Bbb H=\Bbb
  C[W]\otimes\Bbb A,\end{equation} where $\Bbb A$ is the symmetric
algebra over $\fh^*$. The generators are $t_w\in \Bbb C[W]$, $w\in W$
and $\om\in\fh^*$. The relations between the generators are:
\begin{align}\label{1.1.2}
&t_wt_{w'}=t_{ww'},&\text{ for all }w,w'\in W;\notag\\
&t_s^2=1, &\text{ for any simple reflection } s\in W;\notag\\
&\om t_s=t_ss(\om)+c_\al\om(\check\alpha),&\text { for simple reflections } s=s_\alpha.\notag\\
\end{align}

\subsection{}\label{sec:1.2} By \cite{L1}, the center of $\bH$ is
  $\bA^W$.
On any simple (finite dimensional) $\bH$-module, the center of $\bH$ acts by
a character, which we will call a {\it central character}. The central
characters correspond to $W$-conjugacy classes of semisimple elements
$\chi\in \fh.$  We will assume throughout the paper that the
characters are {\it real}, \ie, hyperbolic. 

\subsection{}\label{sec:1.3} We present the {\it Langlands classification} for $\bH$ as
in \cite{E}. 
If $V$ is a (finite dimensional) simple $\bH$-module, $\bA$ induces a
generalized weight space decomposition 
$V=\bigoplus_{\lambda\in \fk h} V_\lambda.$
 Call $\lambda$ a {\it weight of $V$} if $V_\lambda\neq 0.$

\begin{definition} The irreducible module $\sigma$ is called {\it
     tempered} if $\ome_i(\lambda)\le 0,$ for every
  weight $\lambda\in \fk h$ of $\sigma$ and every fundamental
  weight $\ome_i\in\fk h^*,$ and in addition, $\lambda$ is zero on the
  real span of the set $x\in \fh^*$ perpendicular on coroots.  
If $\sigma$ is tempered, and $\ome_i(\lambda)<0,$ for all
  $\lambda,\omega_i$ as above, $\sigma$ is called a {\it discrete series}.
\end{definition}

For every
$\Pi_P\subset \Pi$, define $R_M\subset R$ to be the set of roots generated by
$\Pi_P$,  $\check{R}_M\subset \check R$ the corresponding set of
coroots, and $W_P\subset W$ the corresponding Weyl subgroup.  (The
notation will make more sense in the sequel, when $P=MN$ will denote a
parabolic subgroup of the complex reductive group $G$.)

Let
$\bH_M$ be the Hecke algebra attached to $(\fh, R_M)$. It can be regarded
naturally as a subalgebra of $\bH.$ 

Define $\fk t=\{\nu\in\fk h:
\langle\al,\nu\rangle=0,\text{ for all } \al\in\Pi_P\}$ and $\fk
t^*=\{\lambda\in\fk h^*: \langle\lambda,\check\al\rangle=0,\text{ for
  all }\al\in \Pi_P\}.$ 
Then  $\bH_M$ decomposes as
$$\bH_M=\bH_{M_s}\otimes S(\fk t^*),$$ where $\bH_{M_s}$ is the Hecke
algebra attached to $(\bC\langle\Pi_P\rangle,R_M).$ 

We will denote by $I(P,U)$ the induced module
$I(P,U)=\bH\otimes_{\bH_M} U.$ 

\begin{theorem}[\cite{E}]\label{t:1.4} 
\begin{enumerate}
\item
Every irreducible $\bH$-module
  is a quotient  of a standard induced module
  $X(P,\sigma,\nu)=I(P,\sigma\otimes \bC_\nu),$ where $\sigma$ is
  a tempered module for $\bH_{M_s},$ and $\nu\in \fk t^+=\{\nu\in\fk t:
    \al(\nu)>0,\text{ for all }\al\in\Pi\setminus\Pi_P\}.$ 

\item Assume the notation from (1). Then $X(P,\sigma,\nu)$ has a unique
  irreducible quotient, denoted by $L(P,\sigma,\nu)$.

\item If $L(P,\sigma,\nu)\cong L(P',\sigma',\nu'),$ then $\Pi_P=\Pi_{P'},$ $\sigma\cong \sigma'$ as
$\bH_{M_s}$-modules, and $\nu=\nu'.$
\end{enumerate}
\end{theorem}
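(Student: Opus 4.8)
The goal is the Langlands classification theorem \ref{t:1.4} for the graded Hecke algebra $\bH$, following \cite{E}. The plan is to reduce everything to an asymptotic/positivity analysis of the $\bA$-weights appearing in induced modules. First I would establish part (1): given an irreducible $\bH$-module $V$, choose a weight $\lambda\in\fh$ of $V$ that is ``maximally dominant'' in a suitable sense — concretely, among all weights of all $W$-translates, pick one whose real part $\lambda_0$ has the largest value of $\sum_i \ome_i(\lambda_0)$ (or equivalently maximizes a linear functional in the interior of the antidominant cone). Let $\Pi_P=\{\al\in\Pi:\ome_i\text{-type pairing of }\lambda_0\text{ vanishes on the }\al\text{-direction}\}$; more precisely split $\lambda_0$ via the decomposition $\fh=\fh_{M_s}\oplus\fk t$ (the span of the coroots in $\Pi_P$ and its annihilator) into $\lambda_0=\lambda_0'+\nu$. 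The maximality forces $\al(\nu)>0$ for $\al\in\Pi\setminus\Pi_P$, i.e. $\nu\in\fk t^+$, and forces the ``$M_s$-part'' $\lambda_0'$ to satisfy the tempered inequalities. One then shows the $\bH_M$-submodule $U$ generated by $V_\lambda$ is, after projecting to central character $\nu$ on the $S(\fk t^*)$-factor, a quotient of a tempered $\bH_{M_s}$-module tensored with $\bC_\nu$; by Frobenius reciprocity $V$ is a quotient of $X(P,\sigma,\nu)=\bH\otimes_{\bH_M}(\sigma\otimes\bC_\nu)$ for some tempered $\sigma$. The standard subtlety here — completely reducibility of $U$ versus merely having a tempered subquotient — is handled exactly as in \cite{E}: one uses that the relevant functional is strictly maximized, so $V_\lambda$ cannot ``leak'' into other $W_P$-translates of $\lambda_0$ with larger $\ome_i$-values.

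For part (2), uniqueness of the irreducible quotient, I would argue that in $X(P,\sigma,\nu)$ the weight $\lambda=\lambda_0'+\nu$ (with $\lambda_0'$ a weight of $\sigma$ realizing its extremal behavior) occurs with multiplicity one and is ``highest'' among all weights of $X$ with respect to the partial order coming from the cone spanned by $\Pi\setminus\Pi_P$: every other weight $\mu$ of $X$ satisfies $\sum_i\ome_i(\mu)<\sum_i\ome_i(\lambda)$ unless $\mu$ lies in the same $W_P$-orbit data, using $\nu\in\fk t^+$ and the PBW-type basis $\bH=\bC[W]\otimes\bA$ which lets one read off the $\bA$-weights of $\bH\otimes_{\bH_M}U$ as $W/W_P$-translates of weights of $U$. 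Consequently the subspace $X_\lambda$ generates $X$, so it cannot lie in any proper submodule; hence the sum of all proper submodules is itself proper, giving a unique maximal submodule and a unique irreducible quotient $L(P,\sigma,\nu)$.

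For part (3), the rigidity statement, I would recover $(\Pi_P,\sigma,\nu)$ intrinsically from $L=L(P,\sigma,\nu)$: the pair $(\lambda_0,\text{its stabilizer combinatorics})$ above is determined by $L$ since it is the unique extremal weight, so $\nu$ (the $\fk t$-component) and $\Pi_P$ (the vanishing locus) are invariants of $L$; then $\sigma$ is recovered as (a quotient that is forced to equal all of) the $\bH_M$-module generated by $L_\lambda$, and by the multiplicity-one property this $\bH_{M_s}$-module is exactly $\sigma$. Comparing two presentations $L(P,\sigma,\nu)\cong L(P',\sigma',\nu')$ then forces $\Pi_P=\Pi_{P'}$, $\nu=\nu'$, $\sigma\cong\sigma'$.

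The main obstacle I expect is part (1): choosing the correct extremal weight and proving that the $\bH_M$-module it generates is genuinely (a quotient of) $\sigma\otimes\bC_\nu$ with $\sigma$ \emph{tempered} — this requires the careful ``no leakage'' argument that the maximizing functional is not just weakly but effectively strictly maximized on the relevant facet, together with keeping the $\fh_{M_s}$ and $\fk t$ directions cleanly separated (so that $\al(\nu)>0$ on $\Pi\setminus\Pi_P$ and the tempered inequalities on $\Pi_P$ hold simultaneously). Parts (2) and (3) are then essentially formal consequences of the weight combinatorics established in (1) together with the PBW decomposition \eqref{1.1.1}. Since this is precisely Evens's argument \cite{E}, I would cite it for the details and only indicate the role of the real central character hypothesis, which guarantees all weights are real and the extremal-weight selection is well-posed.
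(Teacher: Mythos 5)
The paper does not prove this theorem at all: it is imported verbatim from Evens \cite{E}, so there is no internal argument to compare yours against. At the level of strategy your outline is the standard one and is indeed what \cite{E} does --- select an extremal weight, use the PBW decomposition $\bH=\bC[W]\otimes\bA$ and Frobenius reciprocity to exhibit $V$ as a quotient of an induced module for (1), show the extremal weight spaces of $X(P,\sigma,\nu)$ occur only in the $1\otimes(\sigma\otimes\bC_\nu)$ layer and generate, so the sum of proper submodules is proper, for (2), and recover $(\Pi_P,\sigma,\nu)$ as invariants of the extremal weight for (3) --- and you defer the delicate points back to \cite{E}, exactly as the paper does.

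The one step you commit to concretely, however, is not the right device, and it is the heart of part (1). Choosing $\lambda_0$ to maximize the single linear functional $\sum_i\ome_i(\lambda_0)$ over the raw weights does not in general produce the Langlands datum: a weight whose $\ome_j$-coordinates are large and positive in some directions but very negative in others can lose this contest to a weight with strictly smaller positive part, and then your recipe ``$\Pi_P=$ vanishing locus, $\nu=\fk t$-component'' returns the wrong parabolic and the wrong $\nu$ (in the extreme case it declares a non-tempered module tempered). What \cite{E}, following Langlands and Borel--Wallach, actually uses is the geometric lemma: to each weight $\lambda$ one attaches the canonical decomposition $\lambda=\lambda_M+\bar\lambda$ in which $\bar\lambda$ is the closest point of $\lambda$ in the closed cone spanned by the fundamental coweights; $\Pi_P$ is the set of simple roots vanishing on $\bar\lambda$, one sets $\nu=\bar\lambda$, and one maximizes $\bar\lambda$ (in the partial order given by nonnegative combinations of simple coroots) over the weights of $V$. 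Only the projections $\bar\lambda$, not the raw weights, can be compared by a linear functional, and it is the defining inequalities of this projection that simultaneously force $\al(\nu)>0$ on $\Pi\setminus\Pi_P$ and the tempered inequalities $\ome_i(\lambda_M)\le 0$ on the $M_s$-part. With that substitution your sketch matches \cite{E}; without it the ``no leakage'' argument you allude to has nothing to run on.
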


We will call a triple $(P,\sigma,\nu)$ as in theorem \ref{t:1.4}, a {\it
  Langlands parameter}.

\section{Geometric parameterization}\label{sec:2}

\noindent{\bf Notation.} In the following, whenever $Q$ denotes a
complex Lie group, $Q^0$ will be the identity component, and $\fk q$ will
denote the Lie algebra. If $s$ is an element of $Q$ or $\fk q$, we
will denote by $Z_Q(s)$ the centralizer in $Q$ of $s.$

\subsection{}\label{sec:2.1} Let $G$ be a reductive connected complex algebraic group, with Lie
algebra $\fg.$ Let $B$ be a Borel subgroup, and $A\subset B$ a maximal
torus, and denote by $\Delta$ the roots of $A$ in $G$, and by $\Delta^+$, the roots of $A$ in $B$.

Let $S=LU$ denote a parabolic subgroup, with $\fk s=\fk
l+\fk u$ the corresponding Lie algebras, such that $\fk l$ admits an
irreducible $L$-equivariant
cuspidal local system  $\Xi$ on a
nilpotent $L$-orbit $\C C\subset \fk l$ (as in \cite{L2},\cite{L5}). The classification of
cuspidal local systems can be found in \cite{L5}. In particular, $W=N(L)/L$ is
a Coxeter group. 

Let $H$ be the center of $L$ with Lie algebra $\fk h$, and let $R$ be
the set of nonzero weights $\al$ for the $ad$-action of $\fk h$ on $\fk g,$
and $R^+\subset R$ the set of weights for which the corresponding
weight space $\fg_\al\subset\fk u.$ For each parabolic $S_j=L_jU_j$, $j=1,n$, such that
$S\subset S_j$ maximally and $L\subset L_j$, let $R_j^+=\{\al\in R^+:
\al(\fz(\fk l_j))=0\},$ where $\fz(\fk l_j)$ denotes the center of
$\fk l_j.$ It is shown in \cite{L2} that each $R_j^+$ contains a unique
$\al_j$ such that $\al_j\notin 2R.$ 

Let $Z_G(\C C)$ denote the centralizer in $G$ of a Lie triple for $\C
C,$ and $\fz(\C C)$ its Lie algebra. 

\begin{proposition}[\cite{L2}]
\begin{enumerate}
\item $R$ is a (possibly non-reduced) root system in $\fk h^*$, with
simple roots $\Pi=\{\al_1,\dots,\al_n\},$ with Weyl group $W.$

\item $H$ is a maximal torus in $Z^0=Z^0_G(\C C)$.

\item $W$ is isomorphic to $W(Z^0_G(\C C))=N_{Z^0}(H)/H.$

\item The set of roots in $\fz(\C C)$ with respect to $\fk h$ is exactly
the set of non-multipliable roots in $R.$  
\end{enumerate}
\end{proposition}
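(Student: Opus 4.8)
The plan is to recall that this is Lusztig's theorem from \cite{L2} and to organize the proof into a formal part, valid for any Levi arising as a cuspidal support, and a rank-one part that rests on the explicit classification of cuspidal local systems in \cite{L5}. Fix once and for all a Lie triple $(e,h,f)$ with $e\in\C C$ and with $h,f\in\fk l$ (Jacobson--Morozov in $\fk l$). Since $S=LU$ is parabolic, $L$ is a Levi subgroup, so $L=Z_G(\fk h)$; hence $\fk l$ is exactly the zero weight space of $\operatorname{ad}(\fk h)$ on $\fk g$, and one has the grading $\fk g=\fk l\oplus\bigoplus_{\al\in R}\fk g_\al$ indexed by $R\cup\{0\}$. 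By the classification of cuspidal pairs in \cite{L5}, the orbit $\C C$ is distinguished in $\fk l$; equivalently $Z_L(e,h,f)^0$ has $H=Z(L)^0$ as its maximal torus. Since $H$ centralizes $\fk l$, it centralizes the triple, so $H\subseteq Z^0$. If $T'\subseteq Z^0$ is a connected torus containing $H$, then $T'$ is abelian and contains $H$, hence $T'\subseteq Z_G(\fk h)=L$; being connected and centralizing the triple, $T'\subseteq Z_L(e,h,f)^0$, whose maximal torus is $H$, forcing $T'=H$. Thus $H$ is a maximal torus of the connected reductive group $Z^0$, which is (2).

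Next I would prove (1) by rank-one reduction. For each $j$, the group $L_j$ is, modulo $L$, of semisimple rank one, and $(L,\C C,\Xi)$ is a cuspidal datum inside $L_j$; since $\fz(\fk l_j)$ has codimension one in $\fk h$, the set $R_j$ is the intersection of $R$ with a line, and running through the list of semisimple-rank-one cuspidal data in \cite{L5} one checks that $R_j$ is a rank-one root system, of type $A_1$ or $BC_1$, with indivisible root $\al_j$, and that $N_{L_j}(L)/L=\langle s_{\al_j}\rangle$. Since $W=N(L)/L$ is generated by the $s_{\al_j}$, the set $R$ is stable under all reflections $s_\al$ ($\al\in R$), is a union of $W$-orbits of the $\al_j$ together with their doubles, and has each rank-one slice as above; hence $R$ is a (possibly non-reduced) root system with simple system $\Pi$ and Weyl group $W$, which is (1).

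For (4), write $\fz(\C C)=\fk g^{\langle e,h,f\rangle}$. Because $\langle e,h,f\rangle\subseteq\fk l=\fk g_0$, the algebra $\langle e,h,f\rangle\cong\mathfrak{sl}_2$ acts on each weight space $\fk g_\al$, and $\fz(\C C)\cap\fk g_\al=(\fk g_\al)^{\mathfrak{sl}_2}$, the trivial isotypic part of $\fk g_\al$ as an $\mathfrak{sl}_2$-module. The same rank-one analysis, now an $\mathfrak{sl}_2$-module computation carried out inside each $L_j$, shows that this space is one-dimensional when $\al$ is non-multipliable and zero otherwise. Since $\fz(\C C)=\operatorname{Lie}(Z^0)$ with $Z^0$ reductive and $H$ a maximal torus of it, the root system of $Z^0$ with respect to $\fk h$ is $\{\al\in R:\fz(\C C)\cap\fk g_\al\neq 0\}$, i.e. the set of non-multipliable roots in $R$, which is (4). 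Part (3) then follows: $W(Z^0_G(\C C))=N_{Z^0}(H)/H$ is the Weyl group of the reductive group $Z^0$ relative to its maximal torus $H$, hence the Weyl group of the root system in (4); removing multipliable roots does not change the reflection group, so this equals $W(R)=W$ by (1).

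I expect the main obstacle to be precisely the rank-one classification underlying (1) and (4): enumerating which semisimple-rank-one cuspidal data $(L,\C C,\Xi)$ occur, deciding when the resulting $R_j$ is reduced, and performing the $\mathfrak{sl}_2$-module computation of $\dim(\fk g_\al\cap\fz(\C C))$. Everything else reduces to standard structure theory of connected reductive groups. The details of this case analysis are carried out in \cite{L2}, relying on the classification in \cite{L5}.
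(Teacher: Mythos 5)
The paper does not actually prove this proposition: it is quoted from Lusztig \cite{L2} (essentially Propositions 2.2 and 2.6--2.12 there), so there is no in-paper argument to compare against. Your reconstruction follows the route Lusztig actually takes: cuspidality forces $\C C$ to be distinguished in $\fk l$, which yields (2) by exactly the maximal-torus argument you give; the rank-one slices $R_j$ and the relative Weyl groups $N_{L_j}(L)/L$ are read off from the classification in \cite{L5}; and (4) is an $\fk{sl}(2)$-isotypic computation in each slice, after which (3) follows because discarding multipliable roots does not change the reflection group. The one step you assert rather than derive is that $W=N(L)/L$ is generated by the rank-one reflections $s_{\al_j}$. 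This is not a formality --- for a general Levi subgroup $L$ the relative Weyl group $N_G(L)/L$ need not be generated by reflections at all --- and it is precisely the content of \cite{L2}, Proposition 2.2, proved there by a chamber/connectedness argument for the hyperplane arrangement $\{\ker\al\}$ in $\fk h$, together with the observation that $N_G(L)$ automatically preserves the cuspidal pair $(\C C,\Xi)$, so that no index drops when passing from $N_G(L)$ to the stabilizer of the datum. If you want your sketch to be self-contained modulo the rank-one case analysis, that generation statement needs to be added as an explicit lemma. A cosmetic point: for (2) to read literally, $H$ should be the connected center $Z(L)^0$ of $L$, as in \cite{L2}, since the full center may be disconnected.
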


For each $j=1,\dots,n$, let $d_j\ge 2$ be such that 
\begin{equation}
(ad(e)^{d_j-2}:\fk l_j\cap\fk u\to\fk l_j\cap\fk u) \neq 0, \text{ and }
(ad(e)^{d_j-1}:\fk l_j\cap\fk u\to\fk l_j\cap\fk u) =0.
\end{equation}

By proposition 2.12 in \cite{L2}, $d_{i}=d_j$ whenever $\al_i$ and
$\al_j$ are $W$-conjugate. Therefore, as in
(\ref{1.1.1}),(\ref{1.1.2}), we can define a Hecke algebra
$\bH_S$ with parameters $c_j=d_j/2$.   The
explicit algebras which may appear are listed in 2.13 of
\cite{L2}. The case of Hecke algebras with equal
parameters $c_j=1$, arises when one takes $S=B$, $R=\Delta,$   and $\C C$
and $\Xi$ to be trivial. 

If $P\subset G$ is a parabolic subgroup, such that $S\subset P,$ then
denote 
\begin{equation}\label{eq:2.1.2}
\Pi_{P/S}=\{\al_j\in\Pi: S_j\subset P\}.
\end{equation}
When $S=B,$ we write just $\Pi_P.$

\smallskip

Let us denote by $\Phi(G)$ the set of graded Hecke algebras $\Bbb H_S$ obtained
by the above construction. The unique Hecke algebra with equal
parameters in $\Phi(G)$ will be denoted by $\bH_0.$

\subsection{}\label{sec:2.2}

 Fix a (hyperbolic) semisimple element ({\it an infinitesimal character}) $\chi\in \fa,$  and set 
\begin{equation}
G_0=\{g\in G: Ad(g)\chi=\chi\},\quad
\fg_t=\{y\in\fg:[\chi,y]=ty\},\ t\in\bR. 
\end{equation}

Note that 
\begin{equation}\label{eq:roots}
\fg_t=\left\{\begin{matrix}\displaystyle{\bigoplus_{\al\in\Delta,\al(\chi)=t}\fg_\al},
&t\neq 0\\\displaystyle{\fa\oplus  \bigoplus_{\al\in\Delta,\al(\chi)=0}\fg_\al},
&t= 0 \end{matrix}\right..
\end{equation}

For $\bH\in\Phi(G)$, corresponding to $S=LU$ in the notation of section
\ref{sec:2.1}, denote by $\text{mod}_\chi\bH$ the category of
finite dimensional $\bH$-modules of central character equal to
the projection of $\chi$ onto $\fh.$

\begin{theorem}[\cite{L3}]\label{t:2.2} There exists a one-to-one correspondence
  between the standard (or irreducible) objects in
  $\sqcup_{\Bbb H\in \Phi(G)}\text{mod}_\chi(\Bbb H)$ and the set of
  pairs $\xi=(\CO,\C L)$, where 
\begin{enumerate}
\item $\CO$ is a $G_0$-orbit on $\fg_1.$
\item $\C L$ is an irreducible $G_0$-equivariant local system on
  $\CO$. 
\end{enumerate}

\end{theorem}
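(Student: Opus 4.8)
The plan is to reconstruct both sides of the asserted bijection inside equivariant Borel--Moore homology, following the geometric construction of \cite{L2,L3}, and then to cut everything down by the fixed infinitesimal character $\chi$. \textbf{Step 1: geometric realization of $\bH_S$.} For each cuspidal datum $S=LU$, $\C C\subset\fk l$, $\Xi$, one forms the variety $\widetilde{\fg}$ of pairs $(x,\fp)$, where $\fp$ is a parabolic subalgebra $G$-conjugate to $\fk s$ and $x$ lies in the nilradical of $\fp$ plus the corresponding translate of $\C C$, equipped with the local system induced from $\Xi$; the $Z_G(\C C)$-equivariant homology of the fibered product $\widetilde{\fg}\times_{\fg}\widetilde{\fg}$, with convolution product, is canonically isomorphic to $\bH_S$. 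This is the main construction of \cite{L2}, and is where the Proposition in section \ref{sec:2.1} enters: $W=N(L)/L$ and the root system $R$ appear as the combinatorics of this homology. Imposing the infinitesimal character $\chi\in\fa$ means restricting to the $\mathrm{ad}\,\chi$-graded pieces of $\fg$: replace $\fg$ by $\fg_1$, replace the equivariance group $Z_G(\C C)$ by $G_0=Z_G(\chi)$ (matching the central torus $H\subset Z^0$ with $\chi$), and specialize the homology at the point $\chi$. The objects of $\mathrm{mod}_\chi(\bH_S)$ are exactly what this specialization produces.

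\textbf{Step 2: standard modules are fibers, and the decomposition theorem produces the pairs.} Let $\pi\colon\widetilde{\fg}_1\to\fg_1$ be the graded analogue of the Springer map. For a nilpotent $e\in\fg_1$, the $G_0$-equivariant homology of $\pi^{-1}(e)$ with the induced coefficients, specialized at $\chi$, is an $\bH_S$-module; these are the standard modules $X(P,\sigma,\nu)$ of Theorem \ref{t:1.4}, now described geometrically. Up to isomorphism such a module depends only on the $G_0$-orbit $\CO=G_0\cdot e$, and the component group $Z_{G_0}(e)/Z_{G_0}(e)^0$ acts on it; decomposing into isotypic pieces for that group is the same as packaging the data of a $G_0$-equivariant irreducible local system $\C L$ on $\CO$. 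Thus every pair $\xi=(\CO,\C L)$ gives a standard module. Conversely, since $\pi$ is proper and the induced coefficient complex is a shift of a semisimple perverse sheaf, the decomposition theorem writes $\pi_*$ of it as a direct sum of shifted $\mathrm{IC}(\overline{\CO'},\C L')$ over pairs $(\CO',\C L')$; reading off the multiplicity modules shows every standard object in $\sqcup_{\bH\in\Phi(G)}\mathrm{mod}_\chi(\bH)$ occurs this way, and the cuspidal support of $\C L$ under the generalized Springer correspondence for $G$ records which algebra $\bH_S\in\Phi(G)$ the pair $\xi$ belongs to.

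\textbf{Step 3: bijectivity and irreducibles; the main obstacle.} What remains is to show $\xi\mapsto X(\xi)$ is a bijection onto standard modules and that it descends to a bijection $\xi\mapsto L(\xi)$ onto irreducibles, where $L(\xi)$ is the constituent of $X(\xi)$ attached to the $\mathrm{IC}$-sheaf of smallest support, equivalently the Langlands quotient of Theorem \ref{t:1.4}(2). Injectivity follows from the distinctness of the $\mathrm{IC}$-sheaves combined with the uniqueness statement Theorem \ref{t:1.4}(3). Surjectivity --- that no pair $(\CO,\C L)$ is ``missed'' --- is the genuinely hard point, and it rests on two geometric inputs: first, the cleanness/cuspidality of $\Xi$ (from \cite{L5}), which forces the complex $\pi_*(\cdots)$ to be pure with decomposition governed entirely by the generalized Springer correspondence of $Z^0$ with Weyl group $W$; and second, a parity-vanishing statement showing that the relevant equivariant homology is concentrated in even degrees, so that each multiplicity module is literally the stalk at $\chi$ of the corresponding local system and is therefore nonzero for every admissible $(\CO,\C L)$. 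I expect Steps 1 and 2 to be essentially bookkeeping once the convolution formalism of \cite{L2,L3} is available; the real work is the purity and parity analysis of Step 3, drawing on the cuspidal local system theory of \cite{L5,L7}.
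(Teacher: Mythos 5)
First, a point of orientation: the paper contains no proof of this theorem --- it is quoted from Lusztig \cite{L3} (with the exhaustion over all cuspidal data as in \cite{L6}), so there is no in-paper argument to measure your proposal against; it can only be judged as a reconstruction of Lusztig's proof. As such, your outline follows the correct route: realize $\bH_S$ as a convolution algebra in equivariant homology (\cite{L2}), realize standard modules as isotypic components, under the component group $Z_{G_0}(e)/Z_{G_0}(e)^0$, of the homology of fibers of the graded Springer-type map, and extract the parameterization from the decomposition theorem plus purity and parity vanishing. Two corrections and one caveat. (a) The equivariance group in your Step 1 should be $G\times\bC^*$ (the $\bC^*$ factor accounting for the grading and the parameter that degenerates to $r$), not $Z_G(\C C)$; the latter group does not act on $\widetilde{\fg}$ and enters only through its Weyl group $W=N(L)/L$ and maximal torus $H$, which supply the root datum of $\bH_S$. (b) Step 1 is not ``bookkeeping'': the identification of the abstract algebra with the convolution algebra is the principal theorem of \cite{L2} and is a substantial part of the whole story. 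The caveat concerns surjectivity: the theorem asserts that \emph{every} irreducible $G_0$-equivariant local system on every orbit occurs once one takes the union over all $\bH_S\in\Phi(G)$; for a fixed $\bH_S$ only the local systems in the block of the cuspidal datum $(L,\C C,\Xi)$ appear, and the claim that these blocks exhaust and partition all pairs $(\CO,\C L)$ is the graded analogue of the generalized Springer correspondence (\cite{L7}, \cite{L6}), not a consequence of the parity analysis you place in Step 3. You gesture at this at the end of Step 2, but it is an independent input. In short: the roadmap is faithful, but essentially all of the mathematical content remains, as you yourself acknowledge, in the cited references.
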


We say that two modules in $\sqcup_{\Bbb H\in
  \Phi(G)}\text{mod}_\chi(\Bbb H)$ are in the same {\it L-packet} if
they correspond to the same orbit $\CO.$

For  $\Bbb H_0$-modules, the local systems which appear are of {\it Springer
  type} (\cite{L7}). More precisely, if $e\in \CO$, then  $\C L$
  corresponds to a representation $\phi$ of the component group
  $Z_{G_0}(e)/Z_{G_0}(e)^0$. The representations $\phi$ which are
  allowed must be in the
  restriction $Z_{G_0}(e)/Z_{G_0}(e)^0\subset Z_G(e)/Z_G(e)^0$ of
  a representation which appears in Springer's correspondence. In particular, the trivial local systems always parameterize $\bH_0$-modules.

\subsection{}\label{sec:2.3}
Let $Orb_1(\chi)$ denote the set of $G_0$ orbits on $\fg_1.$

\begin{enumerate}
\item $Orb_1(\chi)$ is finite.
\item For every $\CO\in Orb_1(\chi),$ $\overline\CO\setminus \CO$ is
  the union of some orbits $\CO'$ with $\dim\CO'<\dim\CO.$
\item There is a unique open (dense) orbit $\CO_{open}$ in $Orb_1(\chi).$ 
\end{enumerate}

In other words, $\fg_1$ is a prehomogeneous vector space with finitely
many $G_0$-orbits.  
A parameterization of $Orb_1(\chi)$ appeared in \cite{Ka}. We will
instead use, in sections \ref{sec:2.4} and \ref{sec:2.5}, the
formulation of \cite{L7}. 

\subsection{}\label{sec:2.2a} By \cite{L4}, the categories $\text{mod}_\chi\Bbb H,$
$\Bbb H\in \Phi(G)$, have tempered modules if and only if $\chi$
is the middle element of a nilpotent orbit in $\fg.$ In this case the
standard modules parameterized by $(\CO_{open},\C L)$ are irreducible
and they exhaust the tempered modules.
If in addition, $\chi$ is the middle element of a distinguished
nilpotent orbit, then the tempered modules are discrete series.

\subsection{}\label{sec:2.3a} By \cite{Re}, there is a unique {\it generic}
module in $\sqcup_{\Bbb H\in \Phi(G)}\text{mod}_\chi(\Bbb H)$, which is
parametrized by $(\CO_{open},triv),$ where $triv$ denotes the trivial local
system. Note that this is always a module of $\Bbb H_0.$ The fact that
the generic module in $\text{mod}_\chi(\Bbb H_0)$ is parameterized by
$(\CO_{open},triv)$ is also an immediate consequence of the results in
\cite{BM} and \cite{BM1}. In \cite{BM}, it is proven that the generic
$\bH_0$-module is characterized by the property that it  contains the
$sign$ representation of $W.$ 

\subsection{}\label{sec:2.4} Let $e$ be a representative of an orbit
$\CO=\CO_e$ in $\fg_1.$ To $e$, one associates, conform \cite{L7}, a parabolic subalgebras of
$\fg$, which will be denoted by $\fp^e$. It will be used to give a
parameterization of $Orb_1(\chi).$ 

By the graded version of the Jacobson-Morozov triple (\cite{L7}),
$e\in \fg_1$ can be
embedded into a Lie
triple $\{e,h,f\}$, such that $h\in \fa\subset\fg_0,$ and $f\in
\fg_{-1}.$ Define a gradation of $\fg$ with respect to $\frac 12h$ as well,
\begin{equation}\fg^r=\{y\in\fg: [\frac 12 h,y]=ry\},\ r\in \frac 12\bZ,
\end{equation}
 and set 
\begin{equation}
\fg^r_t=\fg_t\cap\fg^r.
\end{equation}
 Then
\begin{equation}\fg=\bigoplus_{t,r}\fg^r_t.
\end{equation}
Set
\begin{equation}\label{par1}\fm^e=\bigoplus_{t=r}\fg_t^r,\quad
  \fn^e=\bigoplus_{t<r}\fg_t^r,\quad
  \fp^e=\fm^e\oplus\fn^e.\end{equation} 
Clearly, $\fa\subset\fg_0^0\subset\fm^e.$

\begin{definition}
One says that $\chi$ is {\it rigid} for a Levi subalgebra $\fm,$ if
$\chi$ is congruent modulo $\fz(\fm)$ to a middle element of a
nilpotent orbit in $\fm$. 
\end{definition}

\noindent Whenever $Q$ is a subgroup with Lie algebra $\fk q,$ we will write
$Q_0=Q\cap G_0$ and $\fk q_t=\fk q\cap \fg_t.$

We record the important properties of $\fp^e.$

\begin{proposition}[\cite{L7}]\label{p:2.4} Consider the subalgebra $\fp^e$ defined by
  (\ref{par1}), and let $P^e$ be the corresponding parabolic subgroup.

\begin{enumerate}

\item $\fp^e$ depends only on $e$ and not on the entire Lie triple
  $\{e,h,f\}.$

\item $\chi$ is {rigid} for $\fm^e.$ 

\item $e$ is an element of the
  open $M_0^e$-orbit in $\fm^e_1.$ 

\item The $P^e_{0}$-orbit of $e$ in $\fp^e_{1}$ is open, dense in $\fp^e.$ 

\item  $Z_{G_0}(e)\subset P^e$.

\item The inclusion
  $Z_{M_0^e}(e)\subset Z_{G_0}(e)$ induces an isomorphism
  of the component groups.
\end{enumerate}

\end{proposition}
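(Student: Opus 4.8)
The plan is to prove each assertion of Proposition~\ref{p:2.4} by exploiting the bigrading $\fg=\bigoplus_{t,r}\fg_t^r$ attached to the commuting pair $\chi$ and $\tfrac12 h$, together with $\mathfrak{sl}_2$-theory applied to $\{e,h,f\}$. The guiding observation is that $e\in\fg_1^1$ (since $[\chi,e]=e$ and $[\tfrac12 h,e]=e$), $f\in\fg_{-1}^{-1}$, and $h\in\fg_0^0$, so the Lie triple sits inside $\fm^e=\bigoplus_{t=r}\fg_t^r$. For (1), I would show that $\fn^e=\bigoplus_{t<r}\fg_t^r$ is intrinsically the sum of the negative weight spaces for a cocharacter that can be read off from $e$ alone: on each $\fg_t$ the operator encoding the $\tfrac12 h$-grading is determined, via $\mathfrak{sl}_2$-representation theory, by the action of $e$, since the weight-$r$ subspace of an $\mathfrak{sl}_2$-module of highest weight $m$ is pinned down by the images and kernels of powers of $e$. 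Thus $\fp^e$ depends only on $e$. (Alternatively, one invokes that any two Jacobson--Morozov completions of $e$ inside $\fg_0$ are conjugate by an element of $Z_{G_0}(e)^0$ preserving the grading, but the bigrading argument is cleaner and is the one I'd write.)

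For (2) and (3): inside $\fm^e$ the element $\tfrac12 h$ acts with the same eigenvalue as $\chi$ on every graded piece (that is the defining condition $t=r$), so $\chi-\tfrac12 h$ is central in $\fm^e$, i.e.\ $\chi\equiv\tfrac12 h \pmod{\fz(\fm^e)}$, and $\tfrac12 h$ is by construction the middle element of the nilpotent $G$-orbit of $e$; intersecting with $\fm^e$ one checks $e$ lies in a nilpotent $\fm^e$-orbit whose middle element is $\tfrac12 h$, giving rigidity. That $e$ is in the \emph{open} $M_0^e$-orbit of $\fm^e_1$ follows because, for the middle element of a nilpotent orbit, the corresponding graded piece is a prehomogeneous space whose open orbit is exactly that of the nilpositive element—this is precisely the statement of section~\ref{sec:2.2a} applied inside $\fm^e$, combined with the fact that $\dim Z_{M_0^e}(e)=\dim M_0^e-\dim\fm^e_1$, which one gets from the $\mathfrak{sl}_2$-grading dimension count.

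For (4), (5), (6): I would compute $\dim\fp^e_1$ versus $\dim Z_{G_0}(e)$ and show the $P^e_0$-orbit of $e$ is open in $\fp^e_1$ by checking $[\fp^e_0,e]=\fp^e_1$, which amounts to surjectivity of $\mathrm{ad}(e):\fg_0^r\to\fg_1^{r+1}$ for $r\le 0$—again an $\mathfrak{sl}_2$-fact, since on an $\mathfrak{sl}_2$-module $\mathrm{ad}(e)$ from a nonpositive weight space to the next is onto. For (5), $Z_{G_0}(e)\subset P^e$ follows because $Z_G(e)$ normalizes the Jacobson--Morozov parabolic (the one attached to $\tfrac12 h$), and $Z_{G_0}(e)$ additionally preserves the $\chi$-grading, so it lands in the intersection, which is built from the $t\le r$ (equivalently, from the appropriate sign of $r-t$) pieces defining $P^e$; one must be careful that it is $P^e$ and not a smaller parabolic, which is where the precise shape of the bigrading matters. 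Finally (6): the unipotent radical of $P^e_0$ acts trivially on the open orbit's stabilizer direction, and more precisely $Z_{G_0}(e)$ decomposes compatibly with $\fm^e\oplus\fn^e$; since $Z_{G_0}(e)\subset P^e_0$ and $e$ is in the open $M^e_0$-orbit, the projection $P^e_0\to M^e_0$ restricts to a map $Z_{G_0}(e)\to Z_{M^e_0}(e)$ with pro-unipotent kernel, hence an isomorphism on component groups.

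The main obstacle I anticipate is (5) together with the precise identification in (4): verifying that $Z_{G_0}(e)$ lands in $P^e$ \emph{and} that the $P^e_0$-orbit (not merely the $M^e_0$-orbit) of $e$ is open and dense requires controlling the interaction between the two gradings on the off-diagonal pieces $\fg_t^r$ with $t\neq r$, i.e.\ showing $\mathrm{ad}(e)$ maps $\fn^e_0$ into $\fn^e_1$ and that the relevant bracket maps are surjective. This is the genuinely bigraded part of the argument; the rest reduces to standard $\mathfrak{sl}_2$-module bookkeeping and the prehomogeneity results already recalled in sections~\ref{sec:2.3} and~\ref{sec:2.2a}. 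Everything here is from \cite{L7}; in the write-up I would cite it for the hard structural input and only sketch the bigrading computation.
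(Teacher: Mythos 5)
The paper gives no proof of this proposition: it is quoted from \cite{L7}, where it is established by exactly the kind of bigraded $\fk{sl}(2)$ bookkeeping you outline. So your sketch fills in a citation rather than competing with a written argument, and its architecture is sound and essentially Lusztig's: $e\in\fg_1^1$, $h\in\fg_0^0$, $f\in\fg_{-1}^{-1}$; (1) via the uniqueness of the weight filtration of $\mathrm{ad}(e)$ (so each $\fg^{\ge s}$, hence each $\fg_t\cap\fg^{\ge s}$ and thus $\fp^e$, is determined by $e$ and $\chi$ alone); (2) because $\chi-\tfrac12 h$ acts by $t-r=0$ on $\fm^e$; (5) because $Z_G(e)$ lies in the Jacobson--Morozov parabolic $Q$ of $\tfrac12 h$ and $Q\cap G_0$ and $P^e\cap G_0$ are parabolic subgroups of the connected group $G_0$ with the same Lie algebra $\bigoplus_{r\ge0}\fg_0^r$, hence equal; (6) because the Levi projection $P^e_0\to M^e_0$ carries $Z_{G_0}(e)$ into $Z_{M^e_0}(e)$ with kernel $Z_{G_0}(e)\cap N^e_0$, which is unipotent and therefore connected.

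There is one concrete error to fix, in (4). You assert surjectivity of $\mathrm{ad}(e):\fg_0^r\to\fg_1^{r+1}$ for $r\le 0$, justified by the claim that on an $\fk{sl}(2)$-module the map from a nonpositive weight space to the next one up is onto. Both the range and the justification are reversed. On an irreducible module of highest weight $m$, the map $e:V_\mu\to V_{\mu+2}$ fails to be onto exactly when $\mu=-m-2$ (so that $0=V_{-m-2}\to V_{-m}\neq 0$); surjectivity is therefore guaranteed from \emph{nonnegative} weight spaces (indeed from all $\mu\ge -1$) and can fail from sufficiently negative ones. Fortunately the range you actually need is the one where the correct fact applies: since $\fp^e=\bigoplus_{t\le r}\fg_t^r$, one has $\fp^e_0=\bigoplus_{r\ge 0}\fg_0^r$ and $\fp^e_1=\bigoplus_{r\ge 1}\fg_1^r$, so openness of the $P^e_0$-orbit amounts to $\mathrm{ad}(e):\fg_0^r\to\fg_1^{r+1}$ being onto precisely for $r\ge 0$; this follows by decomposing $\fg$ under $\langle e,h,f\rangle\times\langle\chi-\tfrac12h\rangle$ and applying the corrected $\fk{sl}(2)$ statement summand by summand. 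With the inequality flipped, (4) goes through, and (3) is recovered as the single case $r=0$ (i.e.\ $[\fg_0^0,e]=\fg_1^1$), which is cleaner than the appeal to section~\ref{sec:2.2a}.
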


An immediate corollary of (4) and (5) in the proposition is a dimension
formula for the orbits in $Orb_1(\chi).$

\begin{corollary}[Lusztig]\label{c:lusztig} For an orbit $\CO_e\in
  Orb_1(\chi),$ \begin{equation}\dim \CO_e=\dim
    \fp_1^e-\dim\fp_0^e+\dim\fg_0,\end{equation}
where $\fp^e_i=\fp^e\cap\fg_i,$ $i=0,1.$
\end{corollary}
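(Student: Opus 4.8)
The plan is to deduce the dimension formula directly from parts (4) and (5) of Proposition \ref{p:2.4}, together with the standard fact that for a group action the dimension of an orbit equals the dimension of the acting group minus the dimension of the stabilizer. The key observation is that the formula being claimed is precisely the statement that $\dim\CO_e = \dim G_0 - \dim Z_{G_0}(e)$, rewritten in terms of the parabolic $\fp^e$.

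First I would use part (5) of the proposition, $Z_{G_0}(e)\subset P^e$, which gives $Z_{G_0}(e) = Z_{P^e_0}(e)$, so that $\dim Z_{G_0}(e) = \dim Z_{P^e_0}(e)$. Next I would apply part (4): since the $P^e_0$-orbit of $e$ in $\fp^e_1$ is open dense, we have $\dim P^e_0 - \dim Z_{P^e_0}(e) = \dim \fp^e_1$. Combining these, $\dim Z_{G_0}(e) = \dim P^e_0 - \dim\fp^e_1 = \dim\fp^e_0 - \dim\fp^e_1$, using that $P^e_0 = P^e\cap G_0$ has Lie algebra $\fp^e_0 = \fp^e\cap\fg_0$ (by the convention $\fk q_0 = \fk q\cap\fg_0$ stated just before the proposition).

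Finally I would compute $\dim\CO_e = \dim G_0 - \dim Z_{G_0}(e)$, which holds because $\CO_e$ is the $G_0$-orbit of $e$; substituting the expression for $\dim Z_{G_0}(e)$ obtained above yields
\begin{equation}
\dim\CO_e = \dim\fg_0 - \dim\fp^e_0 + \dim\fp^e_1,
\end{equation}
which is the asserted formula. The only mild subtlety — and what I would state carefully rather than belabor — is the identification $\operatorname{Lie}(P^e_0) = \fp^e_0$ and $\operatorname{Lie}(Z_{P^e_0}(e)) = \operatorname{Lie}(Z_{G_0}(e))$, i.e. that passing to identity components and intersecting with $G_0$ behaves well on Lie algebras; this is where one implicitly uses that $\fg_0 = \operatorname{Lie}(G_0)$ from the definition of $G_0$ in \eqref{eq:roots} and the fact that centralizers of semisimple elements are smooth. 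No real obstacle is expected here: the corollary is a formal bookkeeping consequence of the proposition, and the substantive content (parts (4) and (5)) has already been granted.
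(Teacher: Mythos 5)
Your proof is correct and is exactly the deduction the paper has in mind: the paper gives no written proof, stating only that the formula is ``an immediate corollary of (4) and (5),'' and your argument ($\dim\CO_e=\dim G_0-\dim Z_{G_0}(e)$, with $Z_{G_0}(e)=Z_{P^e_0}(e)$ by (5) and $\dim Z_{P^e_0}(e)=\dim\fp^e_0-\dim\fp^e_1$ by (4)) is precisely that immediate deduction.
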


\subsection{}\label{sec:2.5}
{\bf Definition.} {\it A parabolic subgroup $P$ with Lie algebra $\fp$ is
  called {\it good} for $\chi$ if $\fp=\fp^e$ for some
  nilpotent $e\in \fg_1$ (notation as in (\ref{par1})), and such that
  it satisfies (2) in  proposition \ref{p:2.4}.} 

\medskip

Let $\C P(\chi)$ denote the set of good parabolic
subgroups for $\chi.$ The parameterization of $Orb_1(\chi)$ is as follows.

\begin{theorem}[\cite{L7}]\label{t:2.5}
The map $\CO_e\mapsto P^e$ defined in section \ref{sec:2.4}
induces a bijection between $Orb_1(\chi)$ and $G_0$-conjugacy
classes in $\C P(\chi)$.
\end{theorem}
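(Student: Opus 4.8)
The plan is to prove Theorem \ref{t:2.5} by constructing the inverse map and checking it is well-defined and mutually inverse with $\CO_e \mapsto P^e$. First I would observe that by part (1) of Proposition \ref{p:2.4}, $\fp^e$ depends only on $e$, and by $G_0$-equivariance of the whole construction (the gradations $\fg_t$ and $\fg^r$, the Jacobson--Morozov triple up to $Z_{G_0}(e)$-conjugacy, etc.), conjugate $e$'s yield $G_0$-conjugate parabolics; hence $\CO_e \mapsto P^e$ is a well-defined map $Orb_1(\chi) \to \{G_0\text{-classes in }\C P(\chi)\}$, and its image lands in good parabolics by the definition in section \ref{sec:2.5} together with part (2) of Proposition \ref{p:2.4}. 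The substance is surjectivity and injectivity.

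For surjectivity, take a good parabolic $P$ with Lie algebra $\fp$. By definition $\fp = \fp^{e'}$ for some nilpotent $e' \in \fg_1$ satisfying the rigidity condition (2), so $P$ is already in the image. The real content here is to show that $e'$ can be chosen \emph{canonically up to $P_0$-conjugacy} inside $\fp_1$: namely $e'$ must lie in the open $M_0^{e'}$-orbit in $\fm^{e'}_1 = \fm^P_1$ (Proposition \ref{p:2.4}(3)), and since $\chi$ is rigid for $\fm^P$, by section \ref{sec:2.2a} applied to the smaller group $M^P$ the open orbit carries the tempered (indeed, since $\chi$ is a middle element mod $\fz(\fm^P)$, the relevant distinguished) packet — so the open $M_0^P$-orbit in $\fm^P_1$ is unique and hence a single $G_0$-class of $e'$ is pinned down. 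Then Proposition \ref{p:2.4}(4) says the $P_0$-orbit of this $e'$ in $\fp_1$ is open dense, which shows $P^{e'} = P$ recovers the same parabolic — i.e., the two maps compose correctly in one direction.

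For injectivity, suppose $P^{e_1}$ and $P^{e_2}$ are $G_0$-conjugate; after conjugating we may assume $\fp^{e_1} = \fp^{e_2} = \fp$, with common Levi $\fm$. By Proposition \ref{p:2.4}(3), each $e_i$ lies in the open $M_0$-orbit of $\fm_1$; by the uniqueness of that open orbit (section \ref{sec:2.3}, part (3), applied to $M$, where $\chi$ rigid for $\fm$ makes the setup applicable) the $e_i$ are $M_0$-conjugate, hence $G_0$-conjugate, so $\CO_{e_1} = \CO_{e_2}$. Combined with Proposition \ref{p:2.4}(4) (density of the $P_0$-orbit of $e$ in $\fp_1$), this also verifies that starting from $\CO_e$, passing to $P^e$, and returning gives back $\CO_e$.

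The main obstacle I expect is the surjectivity/well-definedness step: one must verify that a good parabolic $P$ genuinely arises as $\fp^e$ for an $e$ belonging to $Orb_1(\chi)$ (i.e.\ that $e$ lies in $\fg_1$ with the right central character, not merely in some abstract graded piece), and that the rigidity condition (2) is exactly what guarantees the open $M_0$-orbit in $\fm_1$ is nonempty and unique — this is where Lusztig's input on existence of tempered modules (section \ref{sec:2.2a}) is essential and where the bijection could a priori fail if $\chi$ were not rigid for $\fm$. Everything else is bookkeeping with the bigrading $\fg = \bigoplus_{t,r}\fg^r_t$ and $G_0$-equivariance.
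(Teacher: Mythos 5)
Your proposal is correct and follows essentially the same route as the paper: the paper's proof likewise just constructs the inverse map, sending a good parabolic $P=MN$ to the unique $G_0$-orbit in $\fg_1$ meeting the open $M_0$-orbit on $\fm_1$ (equivalently, the open $G_0^0$-orbit on $\fg_1^1$), and leaves the verification that the two maps are mutually inverse to \cite{L7}, which is exactly the bookkeeping you supply. The only quibble is your parenthetical claim that the relevant nilpotent in $\fm$ is distinguished: rigidity of $\chi$ for $\fm$ only means it is congruent mod $\fz(\fm)$ to the middle element of \emph{some} nilpotent orbit, and this aside is not needed since uniqueness of the open dense orbit is automatic.
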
 

\begin{proof}
The definition of the inverse map is at follows. Let $P=MN$ be a good parabolic for
$\chi.$ Then there exists $s$ a middle element of a Lie triple in
$\fm,$ such that  $\chi\equiv s$ (mod
$\fz(\fm)$). Moreover, the decomposition (\ref{par1}) must hold with
respect to $\chi$ and $s.$ Let $G_0^0\subset G_0$ be the reductive
subgroup whose Lie algebra is $\fg_0^0$ (notation as in section
\ref{sec:2.4}).  Then $G_0^0$ acts on
$\fg_1^1$, and there is a unique open orbit of this action. Let
$\CO$ be the unique $G_0$-orbit on $\fg_1$ containing it. The
inverse map associates $\CO$ to $P.$  
\end{proof}

\section{Reducibility points}\label{sec:3}

\subsection{}\label{sec:3.1} Let $\{e,h,f\}$ be a graded Lie triple
for the orbit $\CO_e\in Orb_1(\chi).$ Assume that $\fp=\fm+\fn$ is a
standard parabolic subalgebra, $\fk b\subset\fp$, such  that $\{e,h,f\}\subset\fm.$ Let
$\bar\fp=\fm+\bar\fn$ be the opposite parabolics subalgebra. Let
$\Pi_P\subset\Pi$ denote the simple roots defining $P,$ and denote by
$\Delta_M$ and $\Delta_N$ the roots in $\fm,$ respectively $\fn.$ We can  write
$$\chi=\frac 12 h+\underline\nu, \text{ with }\underline\nu\in\fz_G(e,h,f).$$  
 
\begin{lemma}
Let $\{e,h,f\},\chi$ be as before, and assume that $\chi=\frac 12
h+\underline\nu$ has $\underline\nu$ dominant with respect to $\Delta_N.$ Then:

\begin{enumerate}
\item $\fm^e=\fm=\fz_\fg(\underline\nu).$
\item $\fp^e=\bar\fp.$
\end{enumerate}

In particular, $\bar\fp$ is a good
parabolic for $\chi.$ 
\end{lemma}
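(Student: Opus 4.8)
The plan is to verify the two assertions by carefully comparing the bigraded pieces $\fg_t^r$ coming from the two semisimple elements $\chi$ and $\tfrac12 h$, using the splitting $\chi=\tfrac12 h+\underline\nu$ with $\underline\nu\in\fz_\fg(e,h,f)$. First I would record the elementary observation that since $\underline\nu$ commutes with $h$, each space $\fg^r$ (the $\tfrac12 h$-eigenspace for $r$) is stable under $\mathrm{ad}(\underline\nu)$, and hence also under $\mathrm{ad}(\chi)=\mathrm{ad}(\tfrac12 h)+\mathrm{ad}(\underline\nu)$; concretely, on $\fg^r$ one has $\mathrm{ad}(\chi)=r+\mathrm{ad}(\underline\nu)$. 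Therefore $\fg_t^r=\fg^r\cap\fg_t$ is exactly the $t{-}r$ eigenspace of $\mathrm{ad}(\underline\nu)$ inside $\fg^r$. The hypothesis that $\underline\nu$ is dominant with respect to $\Delta_N$ means precisely that $\alpha(\underline\nu)\ge 0$ for the roots $\alpha$ in $\fn$, and $\alpha(\underline\nu)=0$ exactly for the roots in $\fm$ (using that $\underline\nu$ lies in the center of $\fm=\fz_\fg(e,h,f)$ — this is where I must be slightly careful and I address it below).

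For part (1): the containment $\fm\subset\fz_\fg(\underline\nu)$ is immediate because $\underline\nu$ is central in $\fm$. For the reverse, I would argue that if $\alpha$ is a root with $\alpha(\underline\nu)=0$ then by dominance and the sign conventions $\alpha\in\Delta_M$, so $\fz_\fg(\underline\nu)\subset\fm$; hence $\fz_\fg(\underline\nu)=\fm$. The identification $\fm^e=\fm$ then follows: on the one hand $\fm^e=\bigoplus_{t=r}\fg_t^r$ is, by the eigenvalue computation above, exactly $\bigoplus_r(\fg^r\cap\ker\mathrm{ad}(\underline\nu))=\fg\cap\ker\mathrm{ad}(\underline\nu)=\fz_\fg(\underline\nu)$, and we have just shown the latter equals $\fm$. (One should also note $\{e,h,f\}\subset\fm$ guarantees $e\in\fm_1^e$ consistently with Proposition \ref{p:2.4}(2),(3).)

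For part (2): using again $\fg_t^r=$ the $(t{-}r)$-eigenspace of $\mathrm{ad}(\underline\nu)$ in $\fg^r$, the condition $t<r$ defining $\fn^e$ translates to $\fn^e=\bigoplus_r\{y\in\fg^r:\mathrm{ad}(\underline\nu)y=(t{-}r)y,\ t{-}r<0\}$, i.e. the sum of the strictly negative $\mathrm{ad}(\underline\nu)$-eigenspaces; by the dominance of $\underline\nu$ these are spanned by root spaces $\fg_\alpha$ with $\alpha(\underline\nu)<0$, which are exactly the roots in $\bar\fn$. Thus $\fn^e=\bar\fn$, and combined with $\fm^e=\fm$ we get $\fp^e=\fm^e\oplus\fn^e=\fm\oplus\bar\fn=\bar\fp$. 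The final clause is then automatic: $\bar\fp=\fp^e$ for the nilpotent $e\in\fg_1$, and Proposition \ref{p:2.4}(2) says $\chi$ is rigid for $\fm^e=\fm$, which is the defining condition (2) in Section \ref{sec:2.5} for $\bar\fp$ to be good.

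The main obstacle, I expect, is pinning down precisely what ``dominant with respect to $\Delta_N$'' forces about the zero set of $\underline\nu$: I need that $\alpha(\underline\nu)=0$ \emph{only} for $\alpha\in\Delta_M$ (so that $\fz_\fg(\underline\nu)=\fm$, not something larger). A priori $\underline\nu$ could be non-regular in $\fz(\fm)$. Resolving this may require invoking that $\underline\nu\in\fz_\fg(e,h,f)$ together with the rigidity statement Proposition \ref{p:2.4}(2) for $\fm^e$ — i.e. showing that if $\fz_\fg(\underline\nu)$ were strictly larger than $\fm$ one could enlarge $\fm$ to a Levi for which $\chi$ is still rigid and reach a contradiction with the definition of $\fp^e$, or simply observe that $\fm^e$, computed intrinsically from $\{e,h,f\}$ and $\chi$ as in (\ref{par1}), forces the equality. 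I would handle this by taking $\fm^e=\fz_\fg(\underline\nu)$ as the \emph{definition}-level computation above and then separately checking $\fz_\fg(\underline\nu)=\fm$ using that $\underline\nu$ is dominant-regular relative to the parabolic $\fp$ in the weak sense encoded by the hypothesis; if the hypothesis as stated only gives $\alpha(\underline\nu)\ge0$ on $\Delta_N$ without strictness off $\Delta_M$, then one reads ``$\fm=\fz_\fg(\underline\nu)$'' as part of what ``dominant'' is taken to mean in this setup, which is the natural convention given that $\chi=\tfrac12h+\underline\nu$ must reproduce the bigrading (\ref{par1}).
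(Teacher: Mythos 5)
Your proposal is correct and is essentially the paper's own argument written out in full: the paper disposes of (1) as ``obvious by the definitions'' and of (2) by noting that $\bar\fn=\fn^e$ follows ``from (\ref{par1}) and the dominance conditions,'' which is precisely your eigenvalue computation $\mathrm{ad}(\chi)=r+\mathrm{ad}(\underline\nu)$ on $\fg^r$. Your closing concern about strictness of dominance is legitimate but resolved exactly as you suggest: in context $\underline\nu$ is the Langlands parameter in $\fk t^+$, so $\al(\underline\nu)>0$ on $\Delta_N$ and $\fz_\fg(\underline\nu)=\fm$ is built into the hypothesis.
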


\begin{proof}
The first assertion is obvious by the definitions. From (\ref{par1})
and the dominance conditions, we also see immediately
that $\bar\fn=\fn^e.$ 
\end{proof}

Let $\sigma$ be the tempered module of $\Bbb H_{M_s}$ (notation as in
\ref{sec:1.3}) parameterized by $\{e,h,f\}.$ By the classification theorems
of \cite{L3} and \cite{L4}, we known that, in the correspondences of theorem \ref{t:2.2}, the standard module
$X(P,\sigma,\nu)$, and the Langlands quotient $L(P,\sigma,\nu)$  are
parameterized in $Orb_1(\chi)$ by the orbit $G_0\cdot e.$ Therefore,
in theorem \ref{t:2.5}, they correspond to the parabolic subalgebra
$\bar\fp$.  

\subsection{}\label{sec:3.2} Now assume that $\fp=\fm+\fn$ is a {\it maximal}
parabolic of $\fg.$ Then $\Pi\setminus\Pi_P=\{\al\}$. Let
$\check\ome$ denote the fundamental coweight for $\al.$

As before, let $\sigma$ be the tempered module attached to the map
\begin{equation}\label{eq:param}\fk {sl}(2)=\bC\langle
  e,h,f\rangle\hookrightarrow \fk m.
\end{equation} 
Then $\fk n$ is an $\fk{sl}(2)$-module, via the adjoint action of $\fk
m.$ Let $k(\al)$ denote the multiplicit with which $\al$ appears in
the highest root for $\Delta.$ \footnote{If  $\fg$ is a classical
  simple algebra, this multiplicity is always $1$ or $2$.}

The coweight $\check\ome$ commutes with the $\fk{sl}(2)$. Decompose
$\fk n$ as $\fk n=\oplus_{i=1}^{k(\al)}\fk n_i,$ where
$\fk n_i$ is the $i$-eigenspace of $\check \omega.$ 
Then
decompose each $\fk n_i$ into simple $\fk{sl}(2)$-modules \begin{equation}\label{eq:dec}\fk
n_i=\oplus_j (d_{ij}),\ i=1,\dots,k(\al),\end{equation} where $(d)$ is the simple $\fk
{sl}(2)$-module of dimension $d$.

\begin{theorem}\label{main}Let $\fp=\fm+\fn$ be a maximal parabolic, and $\sigma$ be a generic tempered
  module parameterized by (\ref{eq:param}). Then the reducibility
  points $\nu>0$ of the standard $\bH_0$-module $X(P,\sigma,\nu)$
  are 
\begin{equation}
\nu\in \left\{\frac{d_{ij}+1}{2i}\right\}_{i,j},
\end{equation}
where the integers $d_{ij}$ are defined in (\ref{eq:dec}).
Equivalently, these are the zeros of the rational function in $\nu$,
\begin{equation}\label{eq:red}
\prod_{\beta\in \Delta_N}
\frac{1-\langle\beta,\chi\rangle}{\langle\beta,\chi\rangle},
\end{equation}
where $\chi=\frac 12h+\nu\check\ome$ is the infinitesimal character of
$X(P,\sigma,\nu).$  
\end{theorem}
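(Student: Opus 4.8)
The plan is to reduce the reducibility question for the maximal-parabolic standard module $X(P,\sigma,\nu)$ to a comparison of dimensions of $G_0$-orbits on $\fg_1$, exploiting the geometric parameterization of Theorem~\ref{t:2.5} and the fact that $\sigma$ is generic. Recall from section~\ref{sec:3.1} that $X(P,\sigma,\nu)$ corresponds to the orbit $\CO_e$ whose associated good parabolic is $\bar\fp$, with infinitesimal character $\chi=\frac12 h+\nu\check\ome$. Genericity of $\sigma$ means (by section~\ref{sec:2.3a}) that the local system on $\CO_e$ inside $\fm$ is trivial, so that the relevant competitor in the same $L$-packet is the standard module attached to $(\CO_{open},triv)$, i.e.\ the generic module $X^{gen}_\chi$ of $\bH_0$ at central character $\chi$. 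First I would observe that, for generic data, $X(P,\sigma,\nu)$ is itself the generic module at $\chi$ precisely when $\CO_e=\CO_{open}$, and in that case it is irreducible; reducibility can occur only when $\CO_e\subsetneq\CO_{open}$, equivalently when $\dim\CO_e<\dim\CO_{open}$, which by section~\ref{sec:2.3}(2)--(3) happens exactly when $\bar\fp$ fails to be $G_0$-conjugate to a Borel-type good parabolic at $\chi$.

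The key computation is then to express $\dim\CO_{open}-\dim\CO_e$ in terms of the parameter $\nu$ using Lusztig's dimension formula (the Corollary after Proposition~\ref{p:2.4}): $\dim\CO_e=\dim\fp^e_1-\dim\fp^e_0+\dim\fg_0$, with $\fp^e=\bar\fp$. Since $\fm$ is fixed and only $\fn$ (equivalently $\bar\fn$) varies with $\nu$ through the grading $\fg_t=\bigoplus_{\langle\al,\chi\rangle=t}\fg_\al$, the quantity $\dim\bar\fn_1-\dim\bar\fn_0$ is a step function of $\nu$ that jumps exactly when some root $\beta\in\Delta_N$ has $\langle\beta,\chi\rangle$ crossing the values $0$ or $1$. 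Concretely, for $\beta\in\Delta_N$ write $\langle\beta,\chi\rangle=\frac12\langle\beta,h\rangle+i\nu$ where $i=\langle\beta,\check\ome\rangle\in\{1,\dots,k(\al)\}$ is the $\check\ome$-eigenvalue, and $\langle\beta,h\rangle=d-1-2m$ runs over the weights of the $\fk{sl}(2)$-module $\fn_i$; decomposing $\fn_i=\oplus_j(d_{ij})$ as in (\ref{eq:dec}), the critical values of $\nu$ where the $h$-grading meets the $\chi$-grading nontrivially are exactly $\nu=\frac{d_{ij}+1}{2i}$. At such $\nu$ the orbit $\CO_e$ drops in dimension below $\CO_{open}$ — more precisely, $\bar\fp$ ceases to be the good parabolic of the open orbit — forcing $X(P,\sigma,\nu)$ to be a proper (reducible) standard module, while for $\nu$ outside this finite set $\bar\fp$ is conjugate into $\bH_0$'s Borel data and the module is irreducible. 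The rational-function reformulation (\ref{eq:red}) is then immediate: the numerator $1-\langle\beta,\chi\rangle$ vanishes exactly when $\langle\beta,h\rangle+2i\nu=1$, i.e.\ $\nu=\frac{(1-\langle\beta,h\rangle)}{2i}=\frac{d_{ij}+1}{2i}$ for the top weight $\langle\beta,h\rangle=d_{ij}-1$ of the relevant $\fk{sl}(2)$-summand, and the denominator $\langle\beta,\chi\rangle$ cancels the spurious zeros coming from roots that are already non-generic in $\fm$.

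The main obstacle I anticipate is the irreducibility direction, i.e.\ showing that at \emph{non-critical} $\nu$ the standard module really is irreducible, rather than merely that its geometric parameter is the open orbit. For this I would invoke the generic case directly: by section~\ref{sec:2.3a} the generic module at $\chi$ is the unique one parameterized by $(\CO_{open},triv)$, and a standard module with generic tempered inducing data remains generic after induction; since a generic standard module for $\bH_0$ at a given central character is irreducible iff its geometric parameter is $(\CO_{open},triv)$, the claim follows once one checks $\CO_e=\CO_{open}$ at non-critical $\nu$. That last check is the dimension count above: one must verify that as $\nu$ passes a critical value the good parabolic $\bar\fp$ genuinely changes $G_0$-conjugacy class, which amounts to confirming that the jump in $\dim\bar\fn_1-\dim\bar\fn_0$ is strictly positive at each $\nu=\frac{d_{ij}+1}{2i}$; this is where the combinatorics of the $\fk{sl}(2)$-decomposition (\ref{eq:dec}) and the constraint $k(\al)\le 2$ in the classical case (and a case check in the exceptional cases) does the real work, and I would organize it by tracking, for each pair $(i,j)$, the single root $\beta$ with $\langle\beta,\check\ome\rangle=i$ and $h$-weight $d_{ij}-1$ that moves from $\bar\fn_0$ into $\bar\fn_1$ (or out of $\fg_1$) as $\nu$ increases through $\frac{d_{ij}+1}{2i}$.
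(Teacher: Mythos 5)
Your proposal follows essentially the same route as the paper: irreducibility is equivalent to the parameterizing orbit being the open one (this is exactly where genericity enters), Lusztig's dimension formula converts that into the equality $\#\{\beta\in\Delta_N:\langle\beta,\chi\rangle=1\}=\#\{\beta\in\Delta_N:\langle\beta,\chi\rangle=0\}$, and the $\fk{sl}(2)$-decomposition of $\fn$ telescopes the resulting rational function. The only slips are bookkeeping ones: the relevant quantity is $\dim(\fn\cap\fg_1)-\dim(\fn\cap\fg_0)$ rather than $\dim\bar\fn_1-\dim\bar\fn_0$, and the uncancelled zero of (\ref{eq:red}) in each string $(d_{ij})$ comes from the \emph{lowest} weight $\langle\beta,h\rangle=-(d_{ij}-1)$ via $\nu=\frac{2-\langle\beta,h\rangle}{2i}$, not from the top weight as you wrote (and no case analysis in $k(\al)$ or by Cartan type is needed).
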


\begin{proof} 
Let $\CO(\bar\fp)$ be the orbit parameterizing $X(P,\sigma,\nu)$ by
section \ref{sec:3.1}. By section \ref{sec:2.3a}, $X(P,\sigma,\nu)$ is
irreducible if and only if $\CO(\bar\fp)=\CO_{open}.$

Corollary \ref{p:2.4} implies that 
$\dim\CO(\bar\fp)=\dim \fg_0-\dim (\fg_0\cap\bar\fp)+\dim (\fg_{1}\cap\bar\fp).$
From this and the fact that $\dim \CO_{open}=\dim \fg_{1},$
it follows, by equation (\ref{eq:roots}), that $\CO(\bar\fp)=\CO_{open}$ if and only if
\begin{equation}
\#\{\beta\in \Delta_N: \langle\beta,\chi\rangle=1\} = \#\{\beta\in \Delta_N: \langle\beta,\chi\rangle=0\}.
\end{equation}
Consider the rational function of $\nu$, $\prod_{\beta\in \Delta_N}
\frac{1-\langle\beta,\chi\rangle}{\langle\beta,\chi\rangle}.$  Therefore, the
reducibility points are given by the zeros of this function.

\smallskip

The explicit list of reducibility points follows from the fact that
$\{\langle\beta,h\rangle: \beta\in
\Delta_N\}=\sqcup_{i,j}\{d_{ij}-1,d_{ij}-3,\dots,-d_{ij}+1\},$ and so 
\begin{equation}
\prod_{\beta\in \Delta_N}\frac {1-\langle
  \chi,\beta\rangle}{\langle
  \chi,\beta\rangle}=\prod_{i,j}\frac {\frac
  {d_{ij}+1}{2i}-\nu}{\frac {d_{ij}-1}{2i}+\nu}.
\end{equation}
\end{proof}

We remark that in the proof of formula (\ref{eq:red}), one does not
use the assumption that $\fp$ be maximal parabolic. This formula holds
as is for any parabolic $\fp.$ 

\smallskip

\noindent{\bf Example.} The most interesting example of reducibility
points for maximal parabolic induction is the case $\Pi_P=A_4+A_2+A_1$
in $\Pi=E_8,$ with $e$ the principal nilpotent in $A_4+A_2+A_1$ (which
means that $\sigma$ is the Steinberg representation). Then $k(\al)=6,$
$\dim\fn=106,$ and 
the $sl(2)$ decompositions (\ref{eq:dec}) are
\begin{align}\notag
&\fn_1=(8)+2\cdot(6)+2\cdot (4)+(2) \qquad
  &\fn_2=(9)+(7)+2\cdot(5)+(3)+(1)\\\notag
&\fn_3=(8)+(6)+(4)+(2)\qquad
&\fn_4=(7)+(5)+(3) \\
&\fn_5=(4)+(2) \qquad&\fn_6=(5).
\end{align}
There are $11$ reducibility points: 
\begin{equation}\left\{\frac 3{10},\frac 12,\frac 34,\frac
56,1,\frac 76,\frac 32,2,\frac 52,\frac 72,\frac
92\right\}.\end{equation} 

\subsection{} One also immediately obtains a partial result for
  non-generic data. Recall the notation and construction of section
  \ref{sec:2.1}. In particular, if $\sigma'$ is parameterized by (\ref{eq:param}), there exists a unique triple
  $(S,\C C,\Xi)$ such that $\sigma'$ is a discrete series for the
  subalgebra $\bH_{S,\Pi_{P/S}}$ in $\bH_S.$ 

\begin{proposition}
Let $\sigma$ and $\sigma'$ be tempered modules in the L-packet parameterized
by (\ref{eq:param}), and assume that $\sigma$ is generic. The standard
$\bH_S$-module $X(P/S,\sigma',\nu)$ is reducible for $\nu>0$ {\it only if} the
standard $\bH_0$-module $X(P,\sigma,\nu)$ is reducible. 
\end{proposition}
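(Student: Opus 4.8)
The strategy is to reduce the statement about $\bH_S$-modules to the statement about $\bH_0$-modules proved in Theorem \ref{main}, by comparing the geometric parameters attached to the two standard modules through the correspondences of Theorem \ref{t:2.2} and Theorem \ref{t:2.5}. Both $\sigma$ and $\sigma'$ lie in the same L-packet, parameterized by the Lie triple (\ref{eq:param}), so they are attached to the \emph{same} $G_0$-orbit $\CO_e$ on $\fg_1$, differing only in the choice of local system: $\sigma$ corresponds to $(\CO_e,triv)$ (it is generic, hence of Springer type for $\bH_0$ by section \ref{sec:2.3a}), while $\sigma'$ corresponds to $(\CO_e,\C L')$ for some nontrivial local system, living in $\text{mod}_\chi(\bH_S)$ for the appropriate $(S,\C C,\Xi)$. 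The induced standard modules $X(P,\sigma,\nu)$ and $X(P/S,\sigma',\nu)$, by the discussion at the end of section \ref{sec:3.1}, are both parameterized in $Orb_1(\chi_\nu)$ (where $\chi_\nu = \tfrac12 h + \nu\check\ome$) by the \emph{same} orbit $\CO(\bar\fp)$, again with the same pair of local systems $(triv, \C L')$ respectively. Here I use that the orbit attached to an induced standard module depends, by Theorem \ref{t:2.5} and section \ref{sec:3.1}, only on the parabolic $\bar\fp$, which in turn is determined by $h$ and $\underline\nu = \nu\check\ome$, independently of which algebra in $\Phi(G)$ one works with.

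\textbf{Key steps.} First I would record that irreducibility of a standard module $X(\bar\fp,\xi)$ in $\sqcup_{\bH\in\Phi(G)}\text{mod}_\chi(\bH)$ holds in the following cases: the standard module parameterized by $(\CO_{open}, \C L)$ is irreducible whenever $\chi$ is the middle element of a nilpotent orbit — this is section \ref{sec:2.2a}; more generally, the generic module $(\CO_{open}, triv)$ of $\bH_0$ being irreducible is the content used in Theorem \ref{main}. The point I need is one direction: if $X(P,\sigma,\nu)$ is \emph{irreducible}, then $\CO(\bar\fp) = \CO_{open}$ for the infinitesimal character $\chi_\nu$, by section \ref{sec:2.3a} exactly as in the proof of Theorem \ref{main}. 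Second, since $\sigma'$ is attached to the same orbit $\CO(\bar\fp)$ as $\sigma$ at every $\nu$, if this orbit equals $\CO_{open}$ then by section \ref{sec:2.2a} (applied now inside $\Phi(G)$, i.e. to the relevant $\bH_S$) the standard module $X(P/S,\sigma',\nu)$, being parameterized by $(\CO_{open},\C L')$ at an infinitesimal character which is — one must check — again the middle element of a nilpotent orbit in the appropriate Levi, is itself irreducible. Contrapositively: if $X(P/S,\sigma',\nu)$ is reducible, then $\CO(\bar\fp)\neq\CO_{open}$, hence $X(P,\sigma,\nu)$ is reducible. This is precisely the asserted implication.

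\textbf{Main obstacle.} The delicate point is the claim that section \ref{sec:2.2a} applies to conclude irreducibility of $X(P/S,\sigma',\nu)$ from $\CO(\bar\fp) = \CO_{open}$: one needs that at a reducibility point $\nu_0$ of $X(P,\sigma,\nu)$ the orbit is \emph{not} open, whereas at a point where $X(P/S,\sigma',\nu)$ is reducible we must likewise exit the open orbit. The clean way around this is to use the uniform statement, valid for every $\bH\in\Phi(G)$, that the standard module with parameter $(\CO_{open},\C L)$ is always irreducible — this is the key input of \cite{L4} / section \ref{sec:2.2a}, not restricted to the trivial local system, provided $\chi$ is rigid for the ambient Levi, which holds here by construction (the orbit $\CO(\bar\fp)$ is the open orbit and $\bar\fp = \fp^e$ is good for $\chi_\nu$, so $\chi_\nu$ is rigid for $\fm^e$ by proposition \ref{p:2.4}(2)). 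Thus the entire argument hinges on: (i) that the geometric parameter's \emph{orbit} component is the same for $\sigma$ and $\sigma'$ after induction — which is a direct consequence of Theorem \ref{t:2.5} together with section \ref{sec:3.1}, since the orbit depends only on $\bar\fp$; and (ii) that "parameter is $(\CO_{open}, \text{anything})$" forces irreducibility of the standard module across all of $\Phi(G)$. Once these are granted, the proof is a two-line contrapositive. I would write it as: \emph{Suppose $X(P,\sigma,\nu)$ is irreducible. Then $\CO(\bar\fp) = \CO_{open}$ in $Orb_1(\chi_\nu)$ by sections \ref{sec:2.3a} and \ref{sec:3.1}. Since $\sigma$ and $\sigma'$ are in the same L-packet, $X(P/S,\sigma',\nu)$ is parameterized by the same orbit $\CO_{open}$ (with a possibly different local system) by section \ref{sec:3.1} and Theorem \ref{t:2.5}. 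By section \ref{sec:2.2a}, every standard module parameterized by $\CO_{open}$ is irreducible, so $X(P/S,\sigma',\nu)$ is irreducible. The contrapositive is the claim.}
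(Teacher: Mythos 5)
Your argument is correct and is essentially the paper's own proof read contrapositively: both standard modules are attached to the same orbit $\CO(\bar\fp)$, the open orbit forces irreducibility for any local system, and for the generic module non-openness forces reducibility, so reducibility of $X(P/S,\sigma',\nu)$ implies reducibility of $X(P,\sigma,\nu)$. One small correction to your ``main obstacle'' discussion: the irreducibility of the standard module with parameter $(\CO_{open},\C L)$ at a general (non-rigid) $\chi_\nu$ does not come from section \ref{sec:2.2a} or from rigidity for the Levi, but from the upper-triangularity of the geometric multiplicity matrix --- all constituents of $X(\CO,\C L)$ other than its Langlands quotient are supported on orbits strictly larger than $\CO$, and no such orbits exist when $\CO=\CO_{open}$.
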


\begin{proof}
If $X(P/S,\sigma',\nu)$ is reducible, then the corresponding orbit is not
the open orbit. But this means $X(P,\sigma,\nu)$ is reducible as well. 
\end{proof}

\noindent{\bf Remark.} This result gives necessary conditions for reducibility, but not
sufficient. In fact, they are far from being sharp for non-generic
inducing data as seen in the following example.

\smallskip

\noindent{\bf Example.} Consider $\bH_0$ of type $C_{n+1},$ and $\fp$ of
type $C_n,$ and assume that $n$ is a triangular number. Let the nilpotent
element $e$ correspond to the distinguished orbit $(2,4,\dots,2k)$ in
$\fk{sp(2n)},$ and $\chi$ be half the middle element of a Lie triple for $e.$ 

There are $\left(\begin{matrix} k\\ \lfloor\frac k 2\rfloor\end{matrix}\right)$ discrete
series in $\text{mod}_\chi\bH_0(C_n).$ Let $\sigma$ be the generic one. There exists
a unique nongeneric discrete series, call it $\sigma'$, characterized by the fact that $\sigma'|_{W(C_n)}$ is irreducible. More precisely, $\sigma'|_{W(C_n)}=\mu_k$, where 
\begin{equation}
\mu_k=\left\{\begin{matrix} m^{2m+1}\times 0,&\text{if }k=2m\\
    0\times (m+1)^{2m+1}, &\text{if }k=2m+1.\end{matrix}\right.
\end{equation}
 (The notation for $W(C_n)$-representations, and the algorithms for the Springer correspondence are in \cite{L5}.)

Theorem \ref{main} implies that the reducibility points, $\nu>0$,
for $X(C_n,\sigma,\nu)$ are $$\nu\in\left\{\frac 12,\ \frac 32,\ \frac
52,\dotsc,\ k+\frac 12\right\},$$ 
but one can show, using the $W(C_{n+1})$-structure, that the only reducibility points of $X(C_n,\sigma',\nu)$ are 
$$\nu\in\left\{\lfloor\frac k 2\rfloor+\frac 12, \ k+\frac 12\right\}.$$

\end{document}